\bmdefine{\bD}{D} 
    \newcommand{\spectraldensalt}{\@ifstar{\spectraldensalt@Star}{\spectraldensalt@NoStar}}
        \newcommand{\spectraldensalt@NoStar}{\spectraldenssup{\ast\mspace{-4mu}}}
        \newcommand{\spectraldensalt@Star}[2][]{\spectraldenssup*[#1]{\ast\mspace{-4mu}}{#2}}
\begin{document}
\title[$D$-norms, generators, and spectral functions]{A characterization of $\bD$-norms and their generators based on the family of spectral functions}
\author{Stefan Aulbach}
\address{University of W\"{u}rzburg\\ Institute of Mathematics\\ Emil-Fischer-Str.~30\\ 97074~W\"{u}rz\-burg, Germany}
\email{stefan.aulbach@uni-wuerzburg.de}
\thanks{The author was supported by DFG grant FA 262/4-1.}
\keywords{Functional extreme value theory, max-stable process, generalized
Pareto process, $D$-norm, generator process, spectral functions, spectral
densities.} \subjclass[2010]{60G70 (Primary) 62G32 (Secondary)}

\begin{abstract}
\Citet{aulfaho11} introduced the concept of $D$-norms in the framework of
functional extreme value theory (EVT) extending the multivariate case in a
natural manner. In particular, the distribution of a standard max-stable
process (MSP) $\bfeta\in \Cspace$ is completely determined by its
functional distribution function, which itself is given by some $D$-norm.

In order to generate a generalized Pareto process (GPP) that is in the
functional domain of attraction of $\bfeta$, one may use the fact that
every $D$-norm is defined by some generator process with continuous sample
paths. It is, however, still unknown which generator must be chosen such
that a \emph{given} $D$-norm arises. This is the content of the present
paper. We will, moreover, show that a generator process may be decomposed
into a functional deterministic part and a univariate random one.
\end{abstract}

\maketitle 

\section{Introduction}

\Citet{aulfaho11} introduced the concept of $D$-norms in the framework of
functional extreme value theory (EVT) extending the multivariate case in a
natural manner. In particular, the functional distribution function of any
standard max-stable process (MSP) $\bfeta = \opro{\eta}{t}{\unit}$ in
$\Cspace$, the set of all continuous functions on $\unit$, can be written in
terms of its $D$-norm
\begin{equation*}
  \P{\bfeta\le \f} = \exp*{-\dnorm{\f}}, \qquad \f\in \Ebarmin,
\end{equation*}
where $\Ebarmin := \set*{\f\in \Espace}{\f\le 0}$ with $\Espace$ being the
set of all those real valued functions on $\unit$ that are bounded and have
a finite set of discontinuities. For convenience we write stochastic
processes in bold font, such as $\bfeta$, whereas deterministic functions
are written in normal font; each operator and relation applied to
(stochastic and deterministic) functions is to be read pointwise.

Moreover, each standard generalized Pareto process (GPP) $\bfV =
\opro{V}{t}{\unit}\in \Cspace$ in the functional domain of attraction of
$\bfeta$ has the property
\begin{equation*}
  \P{\bfV\le \f} = 1 - \dnorm{\f}, \qquad \f\in \Ebarmin,\ \supnorm{\f}\le x_0,
\end{equation*}
for some $x_0>0$. The previous equations immediately imply their well-known
multivariate counterparts; simply choose $\f=\sum_{i\le d} x_i
\ind{\set{t_i}}$ for arbitrary $d\in\N$, $0\le t_1<\dots<t_d\le 1$, and
$x_1,\dots,x_d< 0$. We also refer to \citet{buihz08}, \citet{fahure10}, and
\citet{ferrdh12}.

According to \citet{aulfaho11}, a $D$-norm $\dnormd$ is defined by a
\emph{generator} process $\bfZ = \opro{Z}{t}{\unit}$ in $\Cbarplus :=
\set*{\h\in\Cspace}{\h\ge 0}$, namely
\begin{equation} \label{eqn:generator_D-norm}
  \dnorm{\f} = \E{\supnorm{\f\bfZ}}, \qquad \f\in \Espace,
\end{equation}
where $\f\bfZ := \opro*{\f*{t}Z_t}{t}{\unit}\in\Ebarmin$. Recall that a
generator is characterized by $\E{\bfZ}=1$, $t\in\unit$, and
$\E{\supnorm{Z_t}} < \infty$. Furthermore, $\dnormd$ defines a norm on
$\Espace$, \ie a generator $\bfZ$ maps the supremum norm $\supnormd$ onto
the $D$-norm $\dnormd = \E{\supnorm{{\cdot}\bfZ}}$. The case $\bfZ\equiv1$ shows
in particular that $\supnormd$ itself is a $D$-norm, too.

Changing the point of view we get that \eqref{eqn:generator_D-norm} defines
a function on the set of all generators onto the set of all $D$-norms.
Therefore, we call two generators \emph{equivalent} if they are mapped on
the same $D$-norm. For example, any generator $\bfZ\equiv Z$, $Z$ being any
non-negative random variable satisfying $\E{Z}=1$, is equivalent with the
constant function $1$.

It is, however, still unknown which generator $\bfZ$ must be chosen such
that a
\emph{given} $D$-norm 
arises. This is the content of the present paper which is organized as
follows: In \prettyref{sec:generators_special_D-norms} we construct the
generator process for a certain subclass of $D$-norms. These result from a
simple one parametric model that was already considered in \citet{dehap06},
\citet{falk11}, and \citet{aulfa11}. \prettyref{sec:generator_decomposition}
generalizes this approach based on the spectral decomposition of a
max-stable process as described in \citet{dehaf06}. This will show in
particular that each generator may, without loss of generality, be
decomposed into a deterministic functional part and a univariate random one.
Furthermore this provides an important link towards functional extreme value
theory based on weak convergence as provided in \citet{dehaf06}.

\section{Generator processes for special $D$-norms} \label{sec:generators_special_D-norms}

In the following we construct a generator process $\bfZ$ that induces the
$D$-norm
\begin{equation}\label{eqn:D-norm_g_t_h_X}
  \dnorm*{\f}{\spectraldensalt} := \intdif{\R}{\supnorm{\f\spectraldensalt*{s}}}{s}, \quad \f\in\Espace,
\end{equation}
where $\spectraldensalt = \ocoll*{\spectralfunaltsubsup{t}{*}}{t}{\unit}$ is
a family of probability densities $\spectralfunaltsubsup{t}{*}$ on $\R$, \ie
$\spectraldensalt*{s} := \ocoll*{\spectralfunaltsubsup*{t}{*}{s}}{t}{\unit}$
is for every $s\in\R$ a non-negative function on $\unit$. A sufficient
condition for $\dnormd*{\spectraldensalt}$ actually being a $D$-norm is
given in \prettyref{prop:sufficient_condition_D-norm_g_t_h_X}.

\begin{lemma}\label{lem:generator_for_D-norm_g_t_h_X}
Let $\spectraldensalt$ be given as in \eqref{eqn:D-norm_g_t_h_X}. If
$X\in\R$ is a random variable with Lebesgue-density $\h>0$, then
\begin{equation}\label{eqn:generator_for_D-norm_g_t_h_X}
  Z^{\spectraldensalt,\h}_t := \frac{\spectralfunaltsubsup*{t}{*}{X}}{\h*{X}}, \qquad t\in\unit,
\end{equation}
defines a generator process $\bfZ_{\spectraldensalt,\h} =
\pro*[big]{Z^{\spectraldensalt,\h}_t}{t}{\unit}$ if and only if
\begin{equation}\label{eqn:sufficient_condition_D-norm_g_t_h_X}
  \spectraldensalt*{s}\text{ is continuous},\ s\in\R,
  \quad \text{and}\quad
  \intdif{\R}{\supnorm{\spectraldensalt*{s}}}{s} < \infty.
\end{equation}
\end{lemma}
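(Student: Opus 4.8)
The plan is to verify that the process $\bfZ_{\spectraldensalt,\h}$ defined in \eqref{eqn:generator_for_D-norm_g_t_h_X} satisfies the three defining properties of a generator—continuous sample paths, $\E{Z^{\spectraldensalt,\h}_t}=1$ for each $t$, and $\E{\supnorm{Z^{\spectraldensalt,\h}_\cdot}}<\infty$—and that it actually induces the target $D$-norm $\dnorm*{\cdot}{\spectraldensalt}$, and then to show that the two conditions in \eqref{eqn:sufficient_condition_D-norm_g_t_h_X} are not merely sufficient but also necessary. The forward direction is essentially a sequence of substitutions: since $X$ has density $\h$, the defining integral expectation $\E{\supnorm{\f\bfZ_{\spectraldensalt,\h}}}$ unfolds by the transformation theorem as
\begin{equation*}
  \E*{\supnorm*{\f\,\frac{\spectraldensalt*{X}}{\h*{X}}}}
   = \intdif{\R}{\supnorm*{\f\,\frac{\spectraldensalt*{s}}{\h*{s}}}\h*{s}}{s}
   = \intdif{\R}{\supnorm{\f\spectraldensalt*{s}}}{s},
\end{equation*}
where the density $\h*{s}$ is a nonnegative scalar that pulls through the supremum norm and cancels; this is exactly $\dnorm*{\f}{\spectraldensalt}$. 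The same computation with $\f\equiv1$ on a single coordinate gives $\E{Z^{\spectraldensalt,\h}_t}=\intdif{\R}{\spectralfunaltsubsup*{t}{*}{s}}{s}=1$ because each $\spectralfunaltsubsup{t}{*}$ is a probability density, and taking $\f$ to be the constant function and using \eqref{eqn:sufficient_condition_D-norm_g_t_h_X} yields $\E{\supnorm{Z^{\spectraldensalt,\h}_\cdot}}=\intdif{\R}{\supnorm{\spectraldensalt*{s}}}{s}<\infty$.

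For the sample-path continuity I would argue that if $\spectraldensalt*{s}$ is continuous in $t\in\unit$ for each fixed $s\in\R$, then for almost every realization $X=s$ the map $t\mapsto\spectralfunaltsubsup*{t}{*}{s}/\h*{s}$ is continuous, so $\bfZ_{\spectraldensalt,\h}\in\Cbarplus$ almost surely. This is where the first condition in \eqref{eqn:sufficient_condition_D-norm_g_t_h_X} enters, and it is the only role played by the continuity requirement. I expect the main obstacle to lie in the necessity direction rather than in these routine verifications.

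For necessity I would argue contrapositively. If $\intdif{\R}{\supnorm{\spectraldensalt*{s}}}{s}=\infty$, then taking $\f\equiv1$ forces $\E{\supnorm{Z^{\spectraldensalt,\h}_\cdot}}=\infty$, violating the integrability requirement in the definition of a generator, so this condition is necessary. The delicate part is the necessity of continuity: a generator process must by definition have sample paths in $\Cbarplus$, and I would show that if $\spectraldensalt*{s}$ fails to be continuous on a set of positive Lebesgue measure, then with positive probability the realized path $t\mapsto\spectralfunaltsubsup*{t}{*}{X}/\h*{X}$ is discontinuous, contradicting $\bfZ_{\spectraldensalt,\h}\in\Cbarplus$. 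The subtlety here is that discontinuity on an $s$-null set is harmless—since $X$ lands there with probability zero—so the statement must really be read as requiring continuity for (Lebesgue-almost) every $s$, and care is needed to connect almost-sure path continuity of the process to pointwise continuity of the family $\spectraldensalt$. I expect this measure-theoretic bookkeeping, together with justifying the interchange of expectation and the $t$-supremum implicit in the $\supnormd$ computation via dominated convergence under \eqref{eqn:sufficient_condition_D-norm_g_t_h_X}, to be the technical heart of the argument.
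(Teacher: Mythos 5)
Your sufficiency argument and your necessity argument for the integrability condition coincide with the paper's proof: both amount to the same transformation-theorem computations. (Two minor remarks: under \eqref{eqn:sufficient_condition_D-norm_g_t_h_X} \emph{every} sample path is continuous, not merely almost every one; and no dominated convergence or interchange of expectation and supremum is needed anywhere---continuity in $t$ turns $\sup_{t\in\unit}$ into a countable supremum, hence a measurable function of $s$, and the transformation theorem does the rest. Also, the identity $\E{\supnorm{\f\bfZ_{\spectraldensalt,\h}}}=\dnorm*{\f}{\spectraldensalt}$ that you verify is really the content of \prettyref{prop:sufficient_condition_D-norm_g_t_h_X}, not of the lemma itself.)

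The genuine gap is the necessity of the continuity condition, and you have in effect conceded it yourself: your argument only yields that $\spectraldensalt*{s}$ is continuous for Lebesgue-almost every $s$, since each event $\set{X=s}$ is null, and you then propose to re-read the lemma as an almost-everywhere statement. But condition \eqref{eqn:sufficient_condition_D-norm_g_t_h_X} demands continuity of $\spectraldensalt*{s}$ for \emph{every} $s\in\R$, so weakening the conclusion does not prove the stated equivalence. The paper closes exactly this gap with a device your proposal is missing: since the moment requirements $\E{Z^{\spectraldensalt,\h}_t}=1$ and $\E{\supnorm{\bfZ_{\spectraldensalt,\h}}}<\infty$ depend on the distribution of $X$ only, one may pass to the canonical model, replacing $X$ by $\widetilde{X}=\mathrm{id}_\R$ on $\R$ equipped with its Borel-$\sigma$-algebra and the probability measure induced by $\h$. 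The resulting process $\widetilde{\bfZ}_{\spectraldensalt,\h}$ is again a generator, and now the underlying random variable attains \emph{every} value $s\in\R$; continuity of its sample paths therefore forces $t\mapsto\spectralfunaltsubsup*{t}{*}{s}\big/\h*{s}$---and hence, as $\h*{s}>0$ is constant in $t$, the function $\spectraldensalt*{s}$ itself---to be continuous for every single $s\in\R$. This passage from the given $X$ to a version whose range is all of $\R$ is precisely the step that upgrades your ``almost every $s$'' to the ``every $s$'' claimed in the lemma; without it, the ``only if'' direction is not established.
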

\begin{proof}
Let $\spectraldensalt$ have the desired properties. Then we have obviously
$\bfZ_{\spectraldensalt,\h}\in\Cbarplus$,
\begin{equation*}
  \E{Z^{\spectraldensalt,\h}_t}
  = \intdif{\R}{\frac{\spectralfunaltsubsup*{t}{*}{s}}{\h*{s}} \h*{s}}{s}
  = 1, \quad t\in\unit,
\end{equation*}
and, analogously,
\begin{equation*}
  \E{\supnorm{\bfZ_{\spectraldensalt,\h}}}
  = \intdif{\R}{\supnorm{\spectraldensalt*{s}}}{s}
  < \infty.
\end{equation*}

If, on the other hand, $\bfZ_{\spectraldensalt,\h}$ is a generator process,
then we conclude $\intdif{\R}{\supnorm{\spectraldensalt*{s}}}{s} < \infty$
from $\E{\supnorm{\bfZ_{\spectraldensalt,\h}}} < \infty$. Since
$\E[big]{Z^{\spectraldensalt,\h}_t}$ and
$\E{\supnorm{\bfZ_{\spectraldensalt,\h}}}$ do depend on the distribution of
$X$ only, $\widetilde{\bfZ}_{\spectraldensalt,\h} =
\pro*[big]{\widetilde{Z}^{\spectraldensalt,\h}_t}{t}{\unit}$ defined by
\begin{equation*}
  \widetilde{Z}^{\spectraldensalt,\h}_t := \frac{\spectralfunaltsubsup*[big]{t}{*}{\widetilde{X}}}{\h*[big]{\widetilde{X}}}, \qquad t\in\unit,
\end{equation*}
is a generator process, too, where $\widetilde{X} = \mathrm{id}_\R$ is the
identity function on $\R$ equipped with its Borel-$\sigma$-algebra and the
probability measure induced by $\h$. The remaining assertion is, thus,
implied by the continuity of $\widetilde{\bfZ}_{\spectraldensalt,\h}$.
\end{proof}

\begin{proposition}\label{prop:sufficient_condition_D-norm_g_t_h_X}
If condition \eqref{eqn:sufficient_condition_D-norm_g_t_h_X} holds, then
$\dnormd*{\spectraldensalt}$ is a $D$-norm that is generated by
$\bfZ_{\spectraldensalt,\h}$.
\end{proposition}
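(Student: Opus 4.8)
The plan is to read this proposition off \prettyref{lem:generator_for_D-norm_g_t_h_X} together with the defining property \eqref{eqn:generator_D-norm} of a generator. Under condition \eqref{eqn:sufficient_condition_D-norm_g_t_h_X} the lemma already guarantees that $\bfZ_{\spectraldensalt,\h}$ is a bona fide generator process: its sample paths lie in $\Cbarplus$, it satisfies $\E{Z^{\spectraldensalt,\h}_t}=1$ for every $t\in\unit$, and $\E{\supnorm{\bfZ_{\spectraldensalt,\h}}}<\infty$. Since \eqref{eqn:generator_D-norm} asserts that \emph{every} generator induces a $D$-norm via $\f\mapsto\E{\supnorm{\f\bfZ}}$, the map $\E{\supnorm{{\cdot}\bfZ_{\spectraldensalt,\h}}}$ is automatically a $D$-norm. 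It therefore only remains to identify this $D$-norm with $\dnormd*{\spectraldensalt}$.

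First I would fix an arbitrary $\f\in\Espace$ and rewrite the sample-path supremum. Because $\h>0$, the random factor $1/\h*{X}$ does not depend on $t$ and may be pulled out of the supremum over $\unit$, so that $\supnorm{\f\bfZ_{\spectraldensalt,\h}}=\supnorm{\f\spectraldensalt*{X}}/\h*{X}$ almost surely. Taking expectations and integrating against the Lebesgue-density $\h$ of $X$ (the transformation theorem), the computation reads
\begin{align*}
  \E{\supnorm{\f\bfZ_{\spectraldensalt,\h}}}
  &= \E[big]{\frac{\supnorm{\f\spectraldensalt*{X}}}{\h*{X}}}
  = \intdif{\R}{\frac{\supnorm{\f\spectraldensalt*{s}}}{\h*{s}}\,\h*{s}}{s} \\
  &= \intdif{\R}{\supnorm{\f\spectraldensalt*{s}}}{s}
  = \dnorm*{\f}{\spectraldensalt},
\end{align*}
where the density cancels precisely because $\h$ is strictly positive. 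As $\f$ was arbitrary this identifies $\E{\supnorm{{\cdot}\bfZ_{\spectraldensalt,\h}}}$ with $\dnormd*{\spectraldensalt}$ and completes the argument.

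The only points I would take care to isolate are the ones that make the middle integral meaningful and finite. Measurability of $s\mapsto\supnorm{\f\spectraldensalt*{s}}$ is furnished by the continuity hypothesis in \eqref{eqn:sufficient_condition_D-norm_g_t_h_X}, which lets the supremum over $\unit$ be realized along a countable set; and the estimate $\supnorm{\f\spectraldensalt*{s}}\le\supnorm{\f}\,\supnorm{\spectraldensalt*{s}}$ combined with the integrability assumption and the boundedness of $\f$ shows $\dnorm*{\f}{\spectraldensalt}<\infty$. Since the genuine work --- verifying that $\bfZ_{\spectraldensalt,\h}$ really is a generator --- has already been carried out in \prettyref{lem:generator_for_D-norm_g_t_h_X}, I anticipate no real obstacle here beyond bookkeeping the cancellation of $\h$ and justifying the appeal to the transformation theorem; this proposition is in effect the ``if'' direction of the lemma viewed through \eqref{eqn:generator_D-norm}.
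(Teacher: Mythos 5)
Your proposal is correct and matches the paper's own proof: both invoke \prettyref{lem:generator_for_D-norm_g_t_h_X} to certify that $\bfZ_{\spectraldensalt,\h}$ is a generator (hence, by \eqref{eqn:generator_D-norm}, i.e.\ the result of \citet{aulfaho11}, induces a $D$-norm) and then identify that $D$-norm with $\dnormd*{\spectraldensalt}$ via the transformation theorem, cancelling the strictly positive density $\h$. Your added remarks on measurability and finiteness are just explicit bookkeeping of steps the paper leaves implicit.
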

\begin{proof}
Applying \prettyref{lem:generator_for_D-norm_g_t_h_X}, one obtains 
\begin{equation*}
  \begin{split}
    \E{\supnorm{\f\bfZ_{\spectraldensalt,\h}}}
    &= \intdif{\R}{\supnorm{\frac{\f\spectraldensalt*{s}}{\h*{s}}} \h*{s}}{s}
    = \dnorm*{\f}{\spectraldensalt},\quad \f\in \Espace,
  \end{split}
\end{equation*}
\ie $\dnormd*{\spectraldensalt}$ is actually a $D$-norm, see \citet{aulfaho11}.
\end{proof}

Note that the $D$-norm and, thus, the \emph{generator constant} $m =
\dnorm*{1}{\spectraldensalt} =
\intdif{\R}{\supnorm{\spectraldensalt*{s}}}{s}$ of
$\bfZ_{\spectraldensalt,\h}$ do \emph{not} depend on the choice of $\h$.

\begin{example}[\citet{aulfa11}]
Let $\psi : \R\to\intoo{0}{\infty}$ be a continuous probability density
having the properties $\operatorname{\psi}\paren{-s} =
\operatorname{\psi}\paren{s}$, $s\in\R$, and
$\operatorname{\psi}\paren{s_1}\ge \operatorname{\psi}\paren{s_2}$, $0\le
s_1< s_2$. For $\beta> 0$ we define $\operatorname{\psi_\beta}\paren{s} :=
\beta\operatorname{\psi}\paren{\beta s}$, $s\in\R$. Then the $D$-norm
\begin{equation*}
  \dnorm*{\f}{\psi_\beta} := \intdif{\R}{\sup_{t\in\unit}\!\paren{\abs{\f*{t}}\, \operatorname{\psi_\beta}\paren{s-t}}}{s}, \quad \f\in \Espace,
\end{equation*}
is generated by
\begin{equation*}
  \pro*{\frac{\psi_\beta\paren{X-t}}{\h*{X}}}{t}{\unit}
\end{equation*}
for every random variable $X$ with Lebesgue-density $\h>0$.
\end{example}

In the preceding example one may choose $\h = \operatorname{\psi_\beta}$ and
$\operatorname{\psi} = \operatorname{\varphi}$ where
$\operatorname{\varphi}$ denotes the density of the standard normal
distribution. Defining $\sigma := \beta^{-1}$ yields that the corresponding
$D$-norm $\dnormd*{\operatorname{\varphi_{0,\sigma^2}}}$ has by
\prettyref{prop:sufficient_condition_D-norm_g_t_h_X} the generator
\begin{equation*}
  \paren{\exp\paren{\frac{t\paren{2X-t}}{2\sigma^2}}}_{t\in\unit}
\end{equation*}
where $X\sim N(0,\sigma^2)$.

\section{A spectral decomposition of the generator} \label{sec:generator_decomposition}

Now we generalize the approach of constructing a generator process presented in
\prettyref{sec:generators_special_D-norms}. It turns out that one may assume without loss
of generality that a generator $\bfZ$ is the composition of a deterministic function $g :
\unit^2 \to \R$ and a random variable $U$ that is uniformly distributed on $\unit$, see
\prettyref{prop:generator_decomposition}. This reasoning shows that the construction of a
\emph{random} function $\bfZ$ that generates a $D$-norm is reduced to the problem of
finding a \emph{deterministic} function $g$ with suited properties as follows.
\begin{definition} \label{defn:generator_function}
If $\g : \unit^2\to\intco{0}{\infty}$ is a function satisfying
\begin{enumerate}
  \item \label{item:generator_function_C} $\g*{s,{\cdot}}$ is continuous, $\quad
      s\in\unit$,
  \item \label{item:generator_function_int_1} $\g*{{\cdot},t}$ is a Lebesgue
      probability density, $\quad t\in\unit$,
  \item \label{item:generator_function_int_sup_finite}
      $\intdif[1]{0}{\sup_{t\in\unit} \g*{s, t}}{s} < \infty$,
\end{enumerate}
then we call $\spectraldens := \ocoll*{\g*{{\cdot},t}}{t}{\unit}$ a family of
\emph{spectral functions}, according to \citet[Remark\,9.6.2]{dehaf06}, or,
more precisely, a family of \emph{spectral densities}. In this case we
define $\spectraldens*{s} := \ocoll{\g*{s,t}}{t}{\unit} =\g*{s,{\cdot}}$.
\end{definition}

Let $U$ be a random variable that is uniformly distributed on $\unit$. Then
$\spectraldens*{U}$ is obviously a generator process and, thus, gives rise
to a uniquely determined $D$-norm. On the other hand, for each $D$-norm
$\dnormd$ there is a family of spectral densities $\spectraldens$ such that
$\spectraldens*{U}$ is a generator of $\dnormd$, \ie
\begin{equation*}
  \dnorm{\f} = \E{\supnorm{\f\spectraldens*{U}}},
\end{equation*}
giving the desired decomposition of a generator into a deterministic
functional part and a \emph{univariate} random one. The latter assertion is
a consequence of the following result.
It is implied by \citet[Theorem\,9.6.1]{dehaf06} which itself is a
conclusion of \citet{resroy91} and \citet{dehaan84}. For convenience we also
state its proof.

\begin{proposition}[\citet{dehaf06}] \label{prop:generator_decomposition}
Let $\bfeta = \opro{\eta}{t}{\unit}\in \Runit$ be a stochastic process. Then
$\bfeta$ is a standard MSP in $\Cspace$ if and only if there is a family of
spectral densities $\spectraldens$ such that
\begin{equation} \label{eqn:generator_decomposition}
  \P{\bfeta\le \f} = \exp\paren[big]{- \E{\supnorm{\f\spectraldens*{U}}}}, \qquad \f\in\Ebarmin,\ U\sim\Ud.
\end{equation}
\end{proposition}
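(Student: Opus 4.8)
The plan is to reduce the statement to the $D$-norm characterization of standard MSPs established in \citet{aulfaho11}, combined with a representation of an arbitrary generator process as a deterministic transformation of a single uniform random variable. Recall from the introduction that, by \citet{aulfaho11}, a process $\bfeta$ is a standard MSP in $\Cspace$ if and only if $\P{\bfeta\le\f}=\exp*{-\dnorm{\f}}$ for some $D$-norm $\dnormd$, and that every such $D$-norm is generated by a process $\bfZ\in\Cbarplus$ via $\dnorm{\f}=\E{\supnorm{\f\bfZ}}$. Both implications follow once this correspondence is read against \prettyref{defn:generator_function}.

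For the direction from right to left I would set $\bfZ:=\spectraldens*{U}$ with $U\sim\Ud$ and verify that it is a generator in $\Cbarplus$. Property \ref{item:generator_function_C} yields that every sample path $\spectraldens*{s}=\g*{s,{\cdot}}$ is continuous, so $\bfZ\in\Cbarplus$; property \ref{item:generator_function_int_1} gives $\E{Z_t}=\intdif[1]{0}{\g*{s,t}}{s}=1$ for each $t\in\unit$; and property \ref{item:generator_function_int_sup_finite} gives $\E{\supnorm{\bfZ}}=\intdif[1]{0}{\sup_{t\in\unit}\g*{s,t}}{s}<\infty$. Hence $\bfZ$ is a generator, $\E{\supnorm{\f\spectraldens*{U}}}$ is a $D$-norm, and the process $\bfeta$ satisfying \eqref{eqn:generator_decomposition} is a standard MSP whose generator has continuous sample paths, that is, an MSP in $\Cspace$.

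For the converse I would start from a standard MSP $\bfeta\in\Cspace$, so that $\P{\bfeta\le\f}=\exp*{-\dnorm{\f}}$ and, by \citet{aulfaho11}, $\dnormd$ admits a generator $\bfZ\in\Cbarplus$. Since $\Cspace$ is a separable Banach space under $\supnormd$ and $\Cbarplus$ is closed in it, $\Cbarplus$ is Polish, and a standard representation result (every random element of a Polish space is the image of a uniform variable under a Borel map) supplies a measurable $\Phi:\unit\to\Cbarplus$ with $\Phi(U)$ equal in law to $\bfZ$ for $U\sim\Ud$. Setting $\g*{s,t}:=\Phi(s)(t)$, the three conditions of \prettyref{defn:generator_function} are exactly the generator properties of $\bfZ$ read through $\Phi$: $\g*{s,{\cdot}}=\Phi(s)$ is continuous for every $s$ because $\Phi$ ranges in $\Cbarplus$; measurability of $s\mapsto\g*{s,t}$ and of $s\mapsto\sup_{t}\g*{s,t}$ follows from continuity of evaluation and of $\supnormd$ on $\Cspace$; and the two integral conditions reproduce $\E{Z_t}=1$ and $\E{\supnorm{\bfZ}}<\infty$. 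As the $D$-norm depends only on the law of its generator, $\spectraldens*{U}$ and $\bfZ$ induce the same $D$-norm, which is \eqref{eqn:generator_decomposition}.

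I expect the genuine content to lie not in these verifications but in the two facts quoted from \citet{aulfaho11}: that the functional distribution function of an MSP is the exponential of a $D$-norm, and, more delicately, that possessing a generator with continuous sample paths is precisely what forces the MSP to live in $\Cspace$ rather than merely to have the prescribed finite-dimensional distributions. The latter regularity statement is where the spectral theory of \citet{dehaf06}, resting on \citet{resroy91} and \citet{dehaan84}, does the real work; the selection of a measurable $\Phi$ respecting path continuity for \emph{every} $s$ rather than almost every $s$ is automatic here only because $\Phi$ ranges in $\Cbarplus$ by construction.
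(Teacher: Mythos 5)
Your proof is correct in substance, but it takes a genuinely different route from the paper. The paper proves \emph{both} implications through \citet[Theorem~9.6.1]{dehaf06}: it realizes $-1/\bfeta$ in distribution as the pointwise supremum of the points of a Poisson process on $\intoc{0}{\infty}\times\unit$ with intensity $r^{-2}$, and the exponential formula \eqref{eqn:generator_decomposition} is then just the void probability of that process; the spectral densities are handed over by de Haan and Ferreira's theorem. You instead split the statement into (i) the $D$-norm/generator characterization of standard MSPs from \citet{aulfaho11} and (ii) a purely measure-theoretic re-representation: a generator $\bfZ$ is a random element of the Polish space $\Cbarplus$, hence equals in law $\Phi(U)$ for some Borel map $\Phi:\unit\to\Cbarplus$ and $U\sim\Ud$, and $\g*{s,t}:=\Phi(s)(t)$ is a family of spectral densities inducing the same $D$-norm. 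Your verifications of the three conditions of \prettyref{defn:generator_function} (continuity for \emph{every} $s$ because $\Phi$ takes values in $\Cbarplus$; measurability in $s$ via continuity of evaluations and of $\supnormd$ on $\Cspace$; the two integral conditions from $\E{Z_t}=1$ and $\E{\supnorm{\bfZ}}<\infty$) are sound, as is the remark that the $D$-norm depends on the generator only through its law. What each route buys: yours exhibits the decomposition as nothing more than a transfer of the generator's law to the probability space $\paren{\unit,\mathrm{Lebesgue}}$, which is conceptually clean and makes the ``deterministic part plus univariate randomness'' claim transparent; the paper's route keeps the explicit link to the spectral representation of \citet{dehaf06}, which the paper wants for its stated connection to weak-convergence-based functional EVT, and it does not presuppose that every standard MSP admits a generator --- the fact you import from \citet{aulfaho11} is itself proved by that same spectral machinery, so your argument relocates, rather than avoids, the deep step.

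One step in your right-to-left direction is too quick. You conclude that ``the process $\bfeta$ satisfying \eqref{eqn:generator_decomposition} is a standard MSP \dots in $\Cspace$'' directly from the right-hand side being $\exp*{-\dnorm{\f}}$ for a $D$-norm. But $\bfeta$ is only assumed to be a process in $\Runit$, while the characterization in \citet{aulfaho11} concerns processes given in $\Cspace$ beforehand; what that reference yields here is the existence of \emph{some} standard MSP $\bfeta'\in\Cspace$ with this functional distribution function. To transfer the conclusion to $\bfeta$ you still need that the functional df on $\Ebarmin$ determines the law: from $\P{\bfeta\le 0}=1$ the process is a.s.\ non-positive, and evaluating \eqref{eqn:generator_decomposition} at functions supported on finitely many points then identifies all finite-dimensional distributions of $\bfeta$ with those of $\bfeta'$, so that $\bfeta\stackrel{d}{=}\bfeta'$. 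The paper performs exactly this kind of step explicitly (the computation $\P{\sup_{t\in\unit}\!\eta_t<0}=1$ followed by the distributional identity with the Poisson sup-process); your write-up omits it. Note also that, for you as for the paper, the conclusion can only hold in distribution: path continuity of $\bfeta$ itself is not a consequence of \eqref{eqn:generator_decomposition}, since the functional df is unchanged by, say, modifying $\bfeta$ at a single independent uniform time point.
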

\begin{proof}
Denote by $N$ a Poisson point process with points
$\paren{R_i,S_i}\in\intoc{0}{\infty}\times\unit$, $i\in\N$, and intensity
measure $\nu$ defined by $\nu\paren{B} =
\intdif{\unit}{\!\intdif{\intoc{0}{\infty}}{\ind B(r,s)\, r^{-2}}{r}}{s}$
where $B$ is a Borel set in $\intoc{0}{\infty}\times\unit$.
\citet[Theorem\,2.1.1]{reiss93} assures that $N$ actually exists.

Let $\bfeta\in \Cspace$ be a standard MSP. Note that \cite{ginhv90} as well
as \citet[Lemma\,1]{aulfaho11} imply $P\paren[big]{\sup_{t\in\unit} \eta_t<0
} = 1$ and thus $\ocoll*{-1\big/\eta_t}{t}{\unit}$ is simple max-stable. Now
we have by \citet[Theorem\,9.6.1]{dehaf06} that there is a family of
spectral densities $\spectraldens = \coll*{\g*{{\cdot},t}}{t}{\unit}$ satisfying
\begin{align*}
  \P{\bfeta\le \f}
  &= \P{\sup_{i\in\N}\paren{R_i \g*{S_i,t}} \le \abs{\f*{t}}^{-1},\ t\in\unit} \\
  &= \P{N\paren{\set*{\paren{r,s}\in \intoc{0}{\infty}\times\unit}{\exists_{t\in\unit}\, r \g*{s,t} > \abs{\f*{t}}^{-1}}}=0} \\
  &= \exp*{-\nu\paren{\set*{\paren{r,s}\in \intoc{0}{\infty}\times\unit}{r > \brac{\sup_{t\in\unit}\!\paren{\abs{\f*{t}}\g*{s,t}}}^{-1} }}} \\
  &= \exp*{- \intdif{\unit}{\!\intdif{\intoc{\supnorme{\f\spectraldens*{s}}{-1}}{\infty}}{\frac{1}{r^2}}{r}}{s} } \\
  &= \exp*{- \intdif{\unit}{\supnorm{\f\spectraldens*{s}}}{s} } \\
  &= \exp*[big]{- \E{\supnorm{\f\spectraldens*{U}}} }, \qquad \f\in\Ebarmin,
\end{align*}
where $U$ is uniformly distributed on $\unit$.

Now let \eqref{eqn:generator_decomposition} hold for $\bfeta\in\Runit$ and
some family of spectral densities $\spectraldens =
\coll*{\g*{{\cdot},t}}{t}{\unit}$. Since
\begin{equation*}
    \P{\sup_{t\in\unit}\!\eta_t < 0}
    = \P{\bigcup_{n\in\N} \set{\eta_t\le -1/n,\, t\in\unit}}
    = \lim_{n\in\N} \P{\bfeta\le -1/n}
    = 1,
\end{equation*}
we obtain
\begin{equation*}
  \P{-\frac{1}{\bfeta}\le \f}
  = \P{\sup_{i\in\N}\paren{R_i \g*{S_i,t}} \le \f*{t},\ t\in\unit}, \qquad \f\in \Eplus,
\end{equation*}
repeating the arguments from above. This gives
\begin{equation*}
  -\frac{1}{\bfeta} \stackrel{d}{=} \coll*{\sup_{i\in\N}\paren{R_i \g*{S_i,t}}}{t}{\unit},
\end{equation*}
\ie $\bfeta$ is by \citet[Theorem\,9.6.1]{dehaf06} a standard MSP in
$\Cspace$; see also \citet[Theorem\,3.2]{resroy91}.
\end{proof}

We summarize our previous results:
\begin{proposition} \label{prop:D-norm_g}
For each $D$-norm $\dnormd$ there is a family of spectral densities
$\spectraldens = \coll*{\g*{{\cdot},t}}{t}{\unit}$ such that
\begin{equation} \label{eqn:D-norm_g}
  \dnorm{\f} = \intdif{\intcc{0}{1}}{\sup_{t\in\unit}\!\paren{\abs{\f*{t}} \g*{s,t}}}{s},\qquad \f\in \Espace,
\end{equation}
and $\dnormd$ is, thus, generated by $\spectraldens*{U}$ where $U\sim\Ud$.

Conversely, if $\spectraldens$ is a given family of spectral densities, then
\eqref{eqn:D-norm_g} defines a $D$-norm.
\end{proposition}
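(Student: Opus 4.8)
The plan is to prove Proposition~\ref{prop:D-norm_g} by translating both directions into the already-established Proposition~\ref{prop:generator_decomposition} via the defining correspondence between a standard MSP and its $D$-norm. The key observation is that the functional distribution function of a standard MSP is $\P{\bfeta\le\f} = \exp*{-\dnorm{\f}}$, so a $D$-norm and a standard MSP encode exactly the same information; the proposition is therefore essentially a restatement of the spectral decomposition at the level of $D$-norms rather than processes.

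For the first assertion, I would start from an arbitrary $D$-norm $\dnormd$ and use it to define a process $\bfeta$ through $\P{\bfeta\le\f} = \exp*{-\dnorm{\f}}$, $\f\in\Ebarmin$. Since $\dnormd$ is a genuine $D$-norm, this $\bfeta$ is a standard MSP in $\Cspace$ (this is the characterization recalled in the introduction, going back to \citet{aulfaho11}). Applying Proposition~\ref{prop:generator_decomposition} to this $\bfeta$ yields a family of spectral densities $\spectraldens = \coll*{\g*{{\cdot},t}}{t}{\unit}$ with $\P{\bfeta\le\f} = \exp\paren[big]{-\E{\supnorm{\f\spectraldens*{U}}}}$. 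Comparing the two exponential representations and using that the map $\f\mapsto\exp*{-\dnorm{\f}}$ is injective (the exponential is strictly monotone and $\dnormd$ is determined by its values on $\Ebarmin$), I would conclude $\dnorm{\f} = \E{\supnorm{\f\spectraldens*{U}}}$ for all $\f\in\Ebarmin$. Expanding the expectation over the uniform variable $U$ then gives exactly \eqref{eqn:D-norm_g}, and the extension from $\Ebarmin$ to all of $\Espace$ is immediate because both sides depend on $\f$ only through $\abs{\f}$, so it suffices to replace $\f$ by $-\abs{\f}$.

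For the converse, I would take a given family of spectral densities $\spectraldens$ and define a process $\bfeta$ by \eqref{eqn:generator_decomposition}. Proposition~\ref{prop:generator_decomposition} then guarantees that $\bfeta$ is a standard MSP in $\Cspace$, so its functional distribution function is of the form $\exp*{-\dnormd}$ for some $D$-norm; reading off that $D$-norm from \eqref{eqn:generator_decomposition} and again expanding the expectation over $U$ shows that it coincides with the right-hand side of \eqref{eqn:D-norm_g}. Hence \eqref{eqn:D-norm_g} does define a $D$-norm.

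The main obstacle I anticipate is purely bookkeeping rather than conceptual: one must be careful that the equality of $D$-norms, which a priori is asserted only on the cone $\Ebarmin$ where the MSP distribution function lives, genuinely extends to the full space $\Espace$ on which a $D$-norm is defined. This is resolved by the pointwise-on-$\abs{\f}$ structure of both sides, but it is the step where the argument could look like a gap if stated too quickly. Everything else—the strict monotonicity of $\exp$ used for injectivity and the rewriting of $\E{\supnorm{\f\spectraldens*{U}}}$ as the integral $\intdif{\intcc{0}{1}}{\sup_{t\in\unit}\paren{\abs{\f*{t}}\g*{s,t}}}{s}$ via the uniform density of $U$ on $\unit$—is routine.
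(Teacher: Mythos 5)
Your treatment of the first assertion coincides with the paper's: the paper disposes of it with the single remark that it follows directly from \prettyref{prop:generator_decomposition}, and the details you supply---passing to the standard MSP $\bfeta$ with $\P{\bfeta\le \f} = \exp*{-\dnorm{\f}}$, applying \prettyref{prop:generator_decomposition}, cancelling the exponentials, and extending the identity from $\Ebarmin$ to $\Espace$ because both sides depend on $\f$ only through $\abs{\f}$---are exactly what that remark compresses.

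The converse is where you both deviate from the paper and leave a genuine gap. You ``define a process $\bfeta$ by \eqref{eqn:generator_decomposition}'', but \eqref{eqn:generator_decomposition} only prescribes the numbers $\P{\bfeta\le \f}$, $\f\in\Ebarmin$; it does not by itself define a stochastic process, and \prettyref{prop:generator_decomposition} cannot produce one for you: that proposition is stated for a process $\bfeta\in\Runit$ that is \emph{already given}, so its ``if'' direction presupposes precisely the existence you are trying to extract from it. The existence of such a process is the nontrivial content of the spectral representation and has to come from the Poisson point process construction appearing in the paper's proof, \ie from $\bfeta := -1\big/\sup_{i\in\N}\paren{R_i\g*{S_i,{\cdot}}}$ together with \citet[Theorem\,9.6.1]{dehaf06} to guarantee membership in $\Cspace$. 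Once that construction is inserted, your detour through MSPs does close. Note, however, that the paper's converse is far more economical and avoids MSPs entirely: the three defining properties in Definition~\ref{defn:generator_function} say exactly that $\spectraldens*{U}$, $U\sim\Ud$, is itself a generator process---non-negative with continuous sample paths, $\E{\g*{U,t}} = \intdif[1]{0}{\g*{s,t}}{s} = 1$ for all $t\in\unit$, and $\E{\sup_{t\in\unit}\g*{U,t}} = \intdif[1]{0}{\sup_{t\in\unit}\g*{s,t}}{s} < \infty$---while the right-hand side of \eqref{eqn:D-norm_g} is exactly $\E{\supnorm{\f\spectraldens*{U}}}$, which is therefore a $D$-norm by \citet[Proposition\,2.3]{aulfaho11}. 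That one observation is the paper's entire proof of the converse, and it needs no existence argument at all.
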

\begin{proof}
While the first assertion follows directly from
\prettyref{prop:generator_decomposition}, the second one is implied by
\citet[Proposition\,2.3]{aulfaho11}; we also refer to
\citet[Proposition\,3.2]{ginhv90} and \citet[Theorem\,9.4.1]{dehaf06}.
\end{proof}

\begin{example} Let $U$ be uniformly distributed on $\unit$. 
\begin{enumerate}
  \item The supremum norm $\supnormd$ is a $D$-norm generated by
      $\spectraldens*{U} \equiv 2U$.
  \item Let $\spectraldensalt =
      \coll*{\spectralfunaltsubsup{t}{*}}{t}{\unit}$ and
      $\dnormd*{\spectraldensalt}$ be given as in
      \eqref{eqn:D-norm_g_t_h_X} and
      \eqref{eqn:sufficient_condition_D-norm_g_t_h_X}. If $\h>0$ is an
      arbitrarily chosen Lebesgue probability density on $\R$, then we
      obtain
      \begin{equation*}
        \begin{split}
          \dnorm*{\f}{\spectraldensalt}
          &= \intdif*{\R}{\sup_{t\in\unit}\!\paren{\abs{\f*{t}}\, \frac{\gsubsup*{t}{*}{s}}{\h*{s}}}}{\paren{P\ast H^{-1}\paren{U}}}{s} \\
          &= \intdif{\intoo{0}{1}}{\sup_{t\in\unit}\!\paren{\abs{\f*{t}}\, \frac{\gsubsup*{t}{*}{H^{-1}\paren{s}}}{\h*{H^{-1}\paren{s}}}}}{s}, \qquad \f\in \Espace,
        \end{split}
      \end{equation*}
      where $H(x) := \intdif[x]{-\infty}{\h*{s}}{s}$, $x\in\R$, and
      $H^{-1}(u) := \inf\set*{x\in\R}{H(x)\ge u}$, $u\in\unit*$.
      Furthermore, the function $\gsub{\h} : \unit^2\to\intco{0}{\infty}$
      given by
      \begin{equation*}
        \gsub*{\h}{s,t} := \frac{\gsubsup*[big]{t}{*}{H^{-1}\paren{s}}}{\h*[big]{H^{-1}\paren{s}}}, \ s\in\intoo{0}{1},\quad \text{and}\quad \gsub*{\h}{s,t} := 0, \ s\in\set{0,1},
      \end{equation*}
      defines a family of spectral densities $\spectraldenssub{\h}$ and
      thus
      \begin{equation*}
        \coll*{\frac{\gsubsup*[big]{t}{*}{H^{-1}\paren{U}}}{\h*[big]{H^{-1}\paren{U}}}}{t}{\unit}
      \end{equation*}
      is a generator of $\dnormd*{\spectraldensalt}$. This is in
      accordance with \prettyref{prop:sufficient_condition_D-norm_g_t_h_X}
      as the same generator process is obtained replacing $X$ in
      \eqref{eqn:generator_for_D-norm_g_t_h_X} with $H^{-1}\paren{U}$.
\end{enumerate}
\end{example}

\end{document}